\numberwithin{equation}{section}
\def \be{\begin{equs}}
\def \ee{\end{equs}}
\def \be{\begin{equs}}
\def \ee{\end{equs}}
\def \w{w}
\def \C{\rm{Cauchy(0, 1)}}
\def \R {\mathbb{R}}
\newtheorem{theorem}{Theorem}[section]
\newtheorem{lemma}[theorem]{Lemma}
\theoremstyle{plain}
\newtheorem*{thm-non}{Theorem}
\theoremstyle{definition}
\newtheorem{example}[theorem]{Example}
\newtheorem{remark}[theorem]{Remark}
\begin{document}

\title[Ratios and Cauchy Distribution]
{Ratios  and  Cauchy Distribution}


\author{Natesh S. Pillai$^{\ddag}$}
\thanks{$^{\ddag}$pillai@fas.harvard.edu, 
   Department of Statistics
    Harvard University, 1 Oxford Street, Cambridge
    MA 02138, USA}

 
\begin{abstract}
It is well known that the ratio of two independent standard Gaussian random variables follows a Cauchy distribution. Any convex combination of independent standard Cauchy random variables also follows a Cauchy 
distribution. In a recent joint work \cite{Pillai2016}, the author proved a surprising multivariate generalization of the above facts.
Fix $m > 1$ and let $\Sigma$ be a $m\times m$ positive semi-definite matrix. Let   $X,Y \sim \mathrm{N}(0,\Sigma)$ be independent vectors. Let $\vec{\w}=(\w_1, \dots, \w_m)$ be a vector of non-negative numbers with
$\sum_{j=1}^m \w_j = 1.$
It was conjectured in \cite{DrtonXiao16}, and proved in \cite{Pillai2016}, that the random variable
\be 
Z = \sum_{j=1}^m \w_j {X_j \over Y_j}\;
\ee
also has the standard Cauchy distribution.
In this note, we provide some more understanding of this result and give a number of natural generalizations. In particular, we observe that if $(X,Y)$ have the same marginal distribution, they need neither be independent nor be jointly normal for $Z$ to be Cauchy distributed. In fact, our calculations suggest that joint normality of $(X,Y)$ may be the only instance in which they can be independent.
Our results also give a method to construct copulas of Cauchy distributions.
\end{abstract}
\maketitle




\section{Introduction}
Fix $m \in \mathbb{N}$ and let $\Sigma$ be a $m \times m$ positive semi-definite matrix. 
Let $X,Y \sim \mathrm{N}(0,\Sigma)$ be independent vectors. We denote the column vectors as
$X= (X_1, \dots, X_m)$ and $Y = (Y_1, \dots,Y_m)$.
Let $\vec{\w}=(\w_1, \dots, \w_m)$ be such that 
\be \label{eqn:pi}
 \sum_{j=1}^m \w_j = 1, \quad  \w_j\ge 0, \,j=1,\ldots,  m.
\ee
Throughout the paper, the vector $\vec{\w}$ will be assumed to be deterministic, but all of our results hold if $\vec{\w}$ is random but independent of $(X,Y)$.
It was conjectured in \cite{DrtonXiao16}, and recently proved in \cite{Pillai2016}, that the random variable
\be \label{eqn:Z}
Z = \sum_{j=1}^m \w_j {X_j \over Y_j}\;
\ee
has the standard Cauchy distribution with probability density
\be \label{eqn:Cauchyden}
f_{Z}(z) = {1 \over \pi^2} {1 \over 1+ z^2}.
\ee 
 This result is quite surprising and has many important applications, especially in determining the asymptotic behavior of Wald tests in factor models, graphical models, contigency tables, \textit{etc.} For instance, one important consequnce of this result is that, see Theorem 2.2 of \cite{Pillai2016}, if $X \sim \mathrm{N}(0,\Sigma)$, then
 \be \label{eq:levy}
\Big({\w_1 \over X_1}, \dots, {\w_m \over X_m}\Big)^\top \Sigma
\Big({\w_1 \over X_1},  \dots, {\w_m \over X_m}\Big) \sim  \chi^{-2}_1
\ee
where $\chi^{-2}_1$ denotes the inverse chi-squared variable with $1$ degree of freedom. Thus the quadratic form in \eqref{eq:levy} is a pivotal quantity for $\Sigma$ and is a natural test statistic.
 See \cite{DrtonXiao16} for an extensive list of applications and further discussion. 

The proof in \cite{Pillai2016} is short and uses a geometric characterization of the Cauchy distribution. Nevertheless, the result still seems mysterious. Inspection of the proof in \cite{Pillai2016} reveals that it holds in much greater generality. In this note, we provide some more understanding of this result and give a number of natural generalizations. In particular, we relax the assumptions that $(X,Y)$ are independent and that they are jointly normal. \par The following question was posed in \cite{Pillai2016}:  ``for a  given family of random variables $Z_1,\dots,Z_m$, can the dependence among them be overwhelmed
by the heaviness of their marginal tails (\textit{e.g.}, $Z_j = {X_j \over Y_j}$) in
determining the stochastic behavior of their linear combinations?''
The main result of this paper gives numerous examples that answer the above question in the affirmative.  An interesting direction for further inquiry is to fully characterize this phenomenon.
\section{Rotational Invariance and Cauchy}
We will write $Z \sim \C$ to denote that the random variable $Z$ has
the Cauchy distribution with location parameter $0$ and scale parameter $1$, with density $f_Z(z)$ as in Equation \eqref{eqn:Cauchyden}.
Our key observation starts with the following fact:
if $\Theta \sim \mathrm{Unif}(-\pi,\pi]$, then 
\be \label{eqn:tanT}
\tan(\Theta) \sim \C.
\ee
Let $X, Y \sim \mathrm{N}(0,1)$ be independent. 
Then, $Z = {X \over Y} \sim \C$. This is easy to see using \eqref{eqn:tanT}.  Write $(X,Y) = (R \sin(\Theta), R \cos(\Theta))$ with $R \in (0, \infty)$ and
$\Theta \in (\pi,\pi]$. Thus, $Z = \tan(\Theta)$.
Due to the rotational invariance of the joint density of $(X,Y) \in \mathbb{R}^2$, $\Theta \sim \mathrm{Unif}(-\pi,\pi]$. Thus from \eqref{eqn:tanT}, it follows that $ Z \sim \C$. This argument did not use the fact that $(X,Y)$ are jointly Gaussian or independent, but rather that their joint distribution is rotationally invariant in $\mathbb{R}^2$.\par
The above reasoning thus applies to all other rotationally invariant joint distributions for $(X,Y)$. For instance,
if the pair $(X,Y)$ have joint densities
\be
f_{X,Y}(x,y) \propto {1 \over (1+ x^2+y^2)^n}, \quad n \geq 1
\ee
or 
\be
g_{X,Y}(x,y) \propto (x^2+y^2) \exp\{-\sqrt{x^2 + y^2}\},
\ee
then $Z = {X \over Y} \sim \C$. This observation 
also generalizes to multivariate $X$ and $Y$ and is the content of Theorem \ref{thm:main} below. For multivariate $(X,Y)$, in addition to rotational invariance, there are many more ways of incorporating symmetry, or antisymmetry, in their joint density that will lead to a Cauchy 
distribution; see Remark \ref{rem:symm}.

As in \cite{Pillai2016}, the proof of our main result relies crucially on the following result from \cite{pitman1967}. Also see \cite{williams1969cauchy} and \cite{letac1977}) for additional discussions. Lemma \ref{thm:cauchywilres} is proved in \cite{pitman1967} using the Residue theorem. A geometric proof for $m=2$ can be found in \cite{Cohen12}. 
\begin{figure}
\centering
\begin{tikzpicture}[scale=0.85]
\draw (2,2) circle (2.0 cm);
\draw (-2,0) -- (9,0); 
\draw[dashed] (0,4) -- (4,4); 
\draw[dashed] (2,4.5) -- (2,0);
\draw (2,4) -- (5,0);
\draw (2,4) -- (7,0);
\draw [fill] (2,2) circle [radius = 0.05];
\draw [fill] (5,0) circle [radius = 0.05];
\draw [fill] (2,0) circle [radius = 0.05];
\draw [fill] (2,4) circle [radius = 0.05];
\draw [fill] (7,0) circle [radius = 0.05];
\draw [gray] (2,0) -- (2,-1);
\draw [gray] (7,0) -- (7,-1);
\coordinate (A) at (2,4);
\coordinate (X) at (2,0);
\coordinate (Y) at (5,0);
\coordinate (Z) at (7,0);
\draw [fill,blue] (6,0) circle [radius = 0.10];
\draw [->] (6,0) -- (7,2); 
\begin{scope}
\path[clip] (A) -- (X) -- (Y);
\fill[red, opacity=0.5, draw=black] (A) circle (5mm);
\draw (2.3, 3.2) node {$\Theta$};
\end{scope}
\begin{scope}
\path[clip] (Y) -- (A) -- (Z);
\fill[red, opacity=0.5, draw=black] (A) circle (9mm);
\draw (3.3, 2.7) node {$U$};
\end{scope}
\draw [<->] (2,-0.25) -- (5.0,-0.25);
\draw [<->] (2,-1) -- (7.0,-1);
\draw (5,0.35) node{P};
\draw (7,0.35) node{Q};
\draw (3.5,-0.5) node{$Z = \tan(\Theta)$};
\draw (4.5,-1.4) node{$Z' = \tan(\Theta + U)$};
\draw (7.0,2.3) node{$w_1 \tan(\Theta) + w_2\tan(\Theta + U)$};
\end{tikzpicture}
\caption{The circle has diameter $1$. If $\Theta$ and $U$ are independent and $\Theta \sim \mathrm{Unif}(-\pi,\pi]$, then both $Z= \tan(\Theta)$ and $Z'= \tan(\Theta + U)$ are Cauchy distributed. Lemma \ref{thm:cauchywilres} yields that 
any point in the line segment $PQ$ is also Cauchy distributed marginally.}
 \label{fig:geom}
\end{figure}
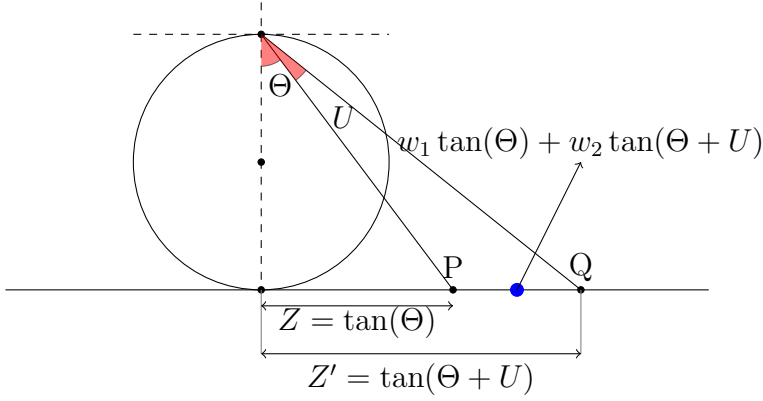
\begin{lemma} \label{thm:cauchywilres}
Let $\Theta_1 \sim \mathrm{Unif}(-\pi,\pi]$,  and $\{w_1,\ldots, w_m\}$ be independent of $\Theta_1$, where $w_j \ge 0$ and $\sum_j w_j=1.$ \ Then for  any $\{u_1, \ldots, u_{m}\}$, where $u_j \in \mathbb{R}$,
\be
\sum_{j=1}^m \w_j\tan(\Theta_1 + u_j) \sim \C.
\ee
\end{lemma}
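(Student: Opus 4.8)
The plan is to identify the law of $S := \sum_{j=1}^m w_j\tan(\Theta_1 + u_j)$ by computing its characteristic function and checking that it equals $e^{-|t|}$, the characteristic function of $\C$; uniqueness of characteristic functions then finishes the argument. The first reduction is that each summand $\tan(\cdot + u_j)$ has period $\pi$, hence so does $S$ as a function of $\Theta_1$, so $S$ has the same law when $\Theta_1 \sim \mathrm{Unif}(-\pi/2,\pi/2)$ as when $\Theta_1 \sim \mathrm{Unif}(-\pi,\pi]$. On the shorter interval I would substitute $T = \tan(\Theta_1)$, which is distributed as $\C$ with density $\tfrac{1}{\pi(1+T^2)}$, and apply the addition formula $\tan(\Theta_1 + u_j) = \frac{T + a_j}{1 - a_j T}$ with $a_j := \tan u_j$. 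This turns the characteristic function into
\[
\psi(t) \;=\; \frac{1}{\pi}\int_{-\infty}^{\infty} \exp\!\big(i t\, h(T)\big)\,\frac{dT}{1+T^2}, \qquad h(z) := \sum_{j=1}^m w_j\,\frac{z + a_j}{1 - a_j z}.
\]

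The engine of the proof is contour integration, as in \cite{pitman1967}. The crucial structural fact is that each M\"obius map $z \mapsto \frac{z+a_j}{1-a_j z}$ has real coefficients and positive determinant $1 + a_j^2$, hence maps the open upper half-plane $\mathbb H = \{\mathrm{Im}\,z > 0\}$ into itself; since $\mathbb H$ is convex and $\sum_j w_j = 1$ with $w_j\ge 0$, the convex combination $h$ satisfies $\mathrm{Im}\, h(z) > 0$ on $\mathbb H$ as well. Consequently, for $t > 0$ one has $|\exp(i t\, h(z))| = \exp(-t\,\mathrm{Im}\, h(z)) \le 1$ throughout $\mathbb H$, so I would close the contour with a large semicircle in $\mathbb H$. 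Because $h(z)$ tends to a finite real constant as $|z|\to\infty$ (or to $\infty$ within $\mathbb H$ if some $a_j=0$) while $\tfrac{1}{1+z^2}$ decays like $z^{-2}$, the large arc contributes nothing in the limit, and the only enclosed singularity is the simple pole of $\tfrac{1}{1+z^2}$ at $z = i$. The residue theorem then gives $\psi(t) = \exp(i t\, h(i))$, and the short computation $\frac{i + a_j}{1 - a_j i} = i$ for every $j$ yields $h(i) = i\sum_j w_j = i$, so $\psi(t) = e^{-t}$. For $t < 0$ I would simply use $\psi(t) = \overline{\psi(-t)}$ (as $S$ is real-valued), obtaining $\psi(t) = e^{-|t|}$ for all $t$.

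The step I expect to be the main obstacle is the treatment of the poles of $h$ on the real axis, located at the points $z = 1/a_j$ where the shifted tangents blow up. These lie exactly on the integration path, so the contour must be indented by small semicircles into $\mathbb H$. Here the half-plane picture pays off: near each such point one checks that $h(z) \sim -\kappa_j/(z - 1/a_j)$ with $\kappa_j > 0$, so that $\mathrm{Im}\, h(z) \to +\infty$ along any upper semicircle, whence $\exp(i t\, h(z)) \to 0$ there for $t>0$ and the indentation contributions vanish as the radius shrinks. I would also record the (routine but necessary) facts that the original integral is absolutely convergent — the integrand is bounded in modulus and decays like $T^{-2}$, so the substitution $T = \tan\Theta_1$ and the subsequent contour deformation are legitimate — and I would dispose of the degenerate cases $a_j = 0$ (i.e.\ $u_j \equiv 0 \bmod \pi$) separately, noting that the corresponding term $w_j z$ only strengthens the decay in $\mathbb H$ and contributes no finite pole.
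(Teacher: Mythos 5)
Your argument is correct: the paper does not prove Lemma~\ref{thm:cauchywilres} itself but defers to \cite{pitman1967}, noting only that the proof there uses the residue theorem, and what you have written is a sound, self-contained reconstruction of exactly that residue-theorem proof --- the characteristic-function contour integral, the observation that each real M\"obius map $z\mapsto (z+a_j)/(1-a_jz)$ has positive determinant and hence preserves the upper half-plane (so the convex combination $h$ does too), the vanishing of the large arc and of the indentations at the real poles $1/a_j$, and the key identity $h(i)=i$ giving $\psi(t)=e^{-|t|}$. The only (trivial) point left implicit is that the lemma allows random weights $w$ independent of $\Theta_1$, which follows by conditioning on $w$ since your computation yields the same Cauchy law for every fixed admissible $w$.
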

Figure \ref{fig:geom} gives a geometric interpretation of Lemma \ref{thm:cauchywilres} for $m=2$.
\section{Cauchy from convex combination of dependent ratios}
Consider a symmetrix matrix $F \in \R^{2m \times 2m}$ of the following form:
\be \label{eqn:Fmat}
F = \left( \begin{array}{cc}
A & B  \\
-B & A  \end{array} \right)
\ee
where $A$ is an arbitrary symmetric $m \times m$ matrix and $B$ 
is an arbitrary $m \times m$ antisymmetric matrix.
The following is our main result.
\begin{theorem} \label{thm:main}
Fix $m >1$. Let $X, Y \in \R^m$ be vectors with joint density
\be
f_{X,Y}(x,y) = K \prod_{i=1}^n h_i \big((X^\top,  Y^\top) F_i (X,  Y)\big)
\ee
where $n \in \mathbb{N}$, $h_i : \R \mapsto \R_+$ are arbitrary measurable functions, $F_i$ are matrices of the form \eqref{eqn:Fmat} and $K$ is the normalizing constant. 
Then, for any vector $w$ satisfying \eqref{eqn:pi},  $Z = \sum_{j=1}^m \w_j {X_j \over Y_j} \sim \C$. Furthermore, for any $1 \leq j \leq m$,
$Z_j = {X_j \over Y_j} \sim \C$. 
\end{theorem}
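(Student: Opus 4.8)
The plan is to recognize the block form \eqref{eqn:Fmat} as the real encoding of a Hermitian form, and then to reduce the statement to Lemma \ref{thm:cauchywilres} by a rotational-invariance argument, exactly paralleling the scalar case discussed above. First I would identify $\R^{2m}$ with $\mathbb{C}^m$ by setting $W_j = X_j + \sqrt{-1}\,Y_j$. Writing $v = (X^\top, Y^\top)^\top$ and $M_i = A_i - \sqrt{-1}\,B_i$, a direct computation using $A_i = A_i^\top$ and $B_i = -B_i^\top$ shows that $M_i$ is Hermitian and that
\[ (X^\top, Y^\top) F_i (X, Y) = W^{*} M_i W , \]
a real number. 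The point of this rewriting is that the Hermitian form $W^{*} M_i W$ is invariant under the global phase rotation $W \mapsto e^{\sqrt{-1}\,\theta} W$ for every $\theta$. In real coordinates this rotation is the orthogonal map sending each pair $(X_j, Y_j)$ to $(\cos\theta\,X_j - \sin\theta\,Y_j,\ \sin\theta\,X_j + \cos\theta\,Y_j)$, i.e.\ a \emph{simultaneous} rotation of all $m$ coordinate pairs by the common angle $\theta$. Since every factor $h_i(W^{*} M_i W)$ is unchanged and the map has unit Jacobian, the joint density $f_{X,Y}$ is invariant under this diagonal $\mathrm{SO}(2)$-action.

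Next I would pass to polar coordinates $(X_j, Y_j) = (R_j \sin\Theta_j,\ R_j \cos\Theta_j)$, so that $X_j/Y_j = \tan\Theta_j$ and the rotation above simply shifts every angle by the same amount, $\Theta_j \mapsto \Theta_j - \theta$, while fixing the radii $R_j$. Setting $U_j = \Theta_j - \Theta_1$ (so $U_1 = 0$), the diagonal rotation acts only on the orbit coordinate $\Theta_1$ and leaves $(R_1,\dots,R_m,U_2,\dots,U_m)$ fixed. Invariance of the distribution under all $\theta$ then forces $\Theta_1 \sim \mathrm{Unif}(-\pi,\pi]$ and $\Theta_1$ to be independent of $(U_2,\dots,U_m)$ (and of the radii, and of $\w$, which is independent of $(X,Y)$ by hypothesis).

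Finally, since
\[ Z = \sum_{j=1}^m \w_j \tan\Theta_j = \sum_{j=1}^m \w_j \tan(\Theta_1 + U_j), \]
I would condition on $(U_2,\dots,U_m)$ and $\w$: given these, the $U_j$ and $\w_j$ are constants, $\Theta_1$ is uniform and independent of them, and Lemma \ref{thm:cauchywilres} yields that the conditional law of $Z$ is $\C$. As this holds for every value of the conditioning variables, $Z \sim \C$ unconditionally. The marginal claim $Z_j \sim \C$ is the special case $\w = e_j$ (which satisfies \eqref{eqn:pi}), or follows at once because $\Theta_j = \Theta_1 + U_j$ is itself $\mathrm{Unif}(-\pi,\pi]$. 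I expect the main obstacle to be the algebraic step: verifying cleanly that the block form \eqref{eqn:Fmat}, with $A$ symmetric and $B$ antisymmetric, is exactly what makes $(X^\top, Y^\top) F_i(X,Y)$ a phase-invariant Hermitian form --- the antisymmetry of $B$ is precisely what kills the imaginary part and the cross terms that would otherwise break invariance. The subsequent measure-theoretic step (uniformity and independence of the orbit angle) is standard but should be stated with care.
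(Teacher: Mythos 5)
Your proposal is correct, and its skeleton coincides with the paper's: pass to polar coordinates, show that $\Theta_1$ is uniform on $(-\pi,\pi]$ and independent of the angle differences $U_j=\Theta_j-\Theta_1$, and then invoke Lemma \ref{thm:cauchywilres} conditionally on $U$ (and on $\w$). Where you differ is in how the key invariance is established. The paper computes the polar-coordinate density explicitly and observes that the quadratic form $(X^\top,Y^\top)F_i(X,Y)$ becomes $\sum_j (A_i)_{jj}r_j^2+2\sum_{k>j}r_jr_k\big((A_i)_{jk}\cos(\theta_j-\theta_k)+(B_i)_{jk}\sin(\theta_j-\theta_k)\big)$, i.e.\ depends on $\Theta$ only through differences; it then has to argue separately that the change of variables $\mathcal{F}:\Theta\mapsto(\Theta_1,U_2,\dots,U_m)$ has unit Jacobian off a null set. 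You instead identify $\R^{2m}\cong\mathbb{C}^m$ via $W=X+\sqrt{-1}\,Y$ and check that $(X^\top,Y^\top)F_i(X,Y)=W^*M_iW$ with $M_i=A_i-\sqrt{-1}\,B_i$ Hermitian (your algebra is right: the symmetry of $A_i$ kills the imaginary part and the antisymmetry of $B_i$ produces the real cross term $2X^\top B_iY$, matching the block form), so the density is invariant under the diagonal phase rotation $W\mapsto e^{\sqrt{-1}\,\theta}W$, and uniformity plus independence of the orbit coordinate $\Theta_1$ follow from group invariance rather than from an explicit density formula. This buys a cleaner conceptual explanation of \emph{why} the hypothesis \eqref{eqn:Fmat} is the right one (it is exactly the condition that each factor be a function of a Hermitian form in $W$, hence $U(1)$-invariant), at the cost of having to state the disintegration/uniformity-on-orbits step carefully --- which is precisely the bookkeeping the paper's explicit computation of $f_{R,\Theta_1,U}$ and of the Jacobian of $\mathcal{F}$ carries out. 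Both routes prove the theorem; no gap.
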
 

\begin{proof}
The proof is almost identical to that of Theorem 1.1 of \cite{Pillai2016}.
Let $F_i$ be the matrix
\be 
F_i = \left( \begin{array}{cc}
A_i & B_i  \\
-B_i & A _i \end{array} \right)
\ee
where $A_i \in \mathbb{R}^{m\times m}$ is symmetric and $B_i \in \mathbb{R}^{m\times m}$ is antisymmetric.\par
Set $(X_j,Y_{j}) = (R_{j}\sin(\Theta_j),R_j\cos(\Theta_j))$, where
$0 \leq R_j < \infty$ and $\Theta_j \in (-\pi, \pi]$.  We write $R=\{R_1,\ldots, R_m\}$ and $\Theta=\{\Theta_1, \ldots,\Theta_m\}$. The Jacobian for the transformation $(X,Y) \mapsto (R,\Theta)$ is $\prod_{j=1}^m R_j$. 
The joint density of $(R,\Theta)$  is
\be  
f_{R, \Theta}&(r, \theta) \propto \\
 &\hspace{-0.5cm}\prod_{j=1}^m r_j\, \prod_{i=1}^n h_i\Big(\sum_{j=1}^m (A_i)_{jj} \, r_j^2 + 2r_j r_k\sum_{k>j} \big((A_i)_{jk}\,\cos(\theta_j - \theta_k)+ (B_i)_{jk}\,\sin(\theta_j - \theta_k)\big)\Big) \label{eqn:J1}
\ee
for $r \in [0,\infty)^m$ and $\theta \in (-\pi, \pi]^m$.
 
We then  make a further  transformation, $\mathcal{F}:(-\pi,\pi]^m \mapsto (-\pi,\pi]^m$, with 
$\mathcal{F}(\Theta_1, \dots, \Theta_m)=  (\Theta_1, U_2, \dots, U_m)$, where
\be \label{eq:mod} 
U_j= (\Theta_j - \Theta_1)+ 2\pi [\mathbf{1}_{\{\Theta_j - \Theta_1 \leq -\pi\}} -\mathbf{1}_{\{\Theta_j - \Theta_1 >\pi\}  }], \quad 2\le  j \leq m.
\ee 
This is a form of $U_j=(\Theta_j - \Theta_1)\mod(2\pi)$, but with the assurance  that the support of $U_j$ is $(-\pi, \pi]$ regardless of the value of $\Theta_1$, and that $U_j-U_k=(\Theta_j-\Theta_k)\mod(2\pi)$, and $\Theta_j=(\Theta_1+U_j)\mod(2\pi)$.
The map $\mathcal{F}$ is one-to-one as shown in Figure 1 of \cite{Pillai2016}.
 Furthermore, the points where the map $\mathcal{F}$ is \emph{not} differentiable is contained in the set
 \be
\big \{\Theta \in (-\pi,\pi]^m: \Theta_j - \Theta_1 \in \{-\pi,\pi\} \, \mathrm{for \,\,some\,\,} j \geq 2 \big\}.
 \ee 
Clearly this set has Lebesgue measure zero. Outside this set, we have ${\partial U_j \over \partial \Theta_j} = 1$. Thus the Jacobian of the map $\mathcal{F}$  is 1 for all $\Theta \in (-\pi,\pi]^m$ except for the above measure zero set. \par
Set $U_1 \equiv 0$ and denote $U = (U_1, \dots, U_m)$.
Since $\cos(W_1) = \cos(W_2)$ and $\sin(W_1) = \sin(W_2)$ for any $W_1=W_2\mod(2\pi)$,
we can write the joint density in the new coordinates as 
\be 
 f_{R, \Theta_1, U}&(r, \theta_1, u) \propto \\ 
 &\hspace{-0.5cm}\prod_{j=1}^m r_j\, \prod_{i=1}^n h_i\Big(\sum_{j=1}^m (A_i)_{jj} \, r_j^2 + 2r_j r_k\sum_{k>j} \big((A_i)_{jk}\,\cos(u_k - u_j)+ (B_i)_{jk}\,\sin(u_k - u_j)\big)\Big)
\ee
with $r \in [0,\infty)^m, \theta_1 \in (-\pi, \pi], u_1 = 0$ and $u_2, \dots, u_m \in (-\pi, \pi]$.
The only observations we need from the above line are: (i) $\Theta_1$ is independent of $U$ and  (ii) $\Theta_1 \sim \mathrm{Unif}(-\pi,\pi]$. 
But   $Z= \sum_{j} w_j {X_j \over Y_j}$ can be written as 
\be
Z = \sum_{j=1}^m \w_j {X_j \over Y_j}= \sum_{j=1}^m \w_j \tan(\Theta_j ) 
= \sum_{j=1}^m \w_j \tan(\Theta_1 + U_j), 
\ee 
because $\tan(W_1) = \tan(W_2)$ for any $W_1=W_2\mod(2\pi)$. Since $U$ is independent of $\Theta_1$, conditional on  $U$, Lemma  \ref{thm:cauchywilres} yields that $Z\sim \C$.  It follows immediately that $Z$ is also marginally distributed as $\C$.\par
Since $\Theta_1 \sim \mathrm{Unif}(-\pi,\pi)$, by \eqref{eqn:tanT} it follows that 
$Z_1 = {X_1 \over Y_1} = \tan(\Theta_1) \sim \C$. Since the ordering of variables in the preceding argument was arbitrary, by symmetry it follows immediately that $\Theta_j \sim \mathrm{Unif}(-\pi,\pi]$ and thus $Z_j = {X_j \over Y_j} = \tan(\Theta_j) \sim \C$.
 \end{proof}
 \begin{remark}\label{rem:xian}
 If the joint density $f_{X,Y}(x,y)$ can be written as a mixture of Gaussians:
 \be[eqn:gausmix]
 f_{X,Y}(x,y) &= \sum_{n=1}^\infty \alpha_n g^{(n)}_{X,Y}(x,y) \\
 g^{(n)}_{X,Y}(x,y) &\propto \exp \big \{ -{ 1 \over 2} (X^\top \Sigma^{-1}_n X + Y^\top \Sigma^{-1}_n Y) \big \}
 \ee
 with $\sum_n \alpha_n = 1$ and $\Sigma_n$ are arbitrary positive definite matrices, then the main result of \cite{Pillai2016} will immediately yield that 
 $Z  = \sum_{j=1}^m w_j {X_j \over Y_j} \sim \C$. To see this, let $N \in \mathbb{N}$ be a discrete random variable with $\mathbb{P}(N=n) = \alpha_n$. Conditional on $N=n$, let $(X,Y)$ has joint density $g^{(n)}_{X,Y}(x,y)$. Then, marginalizing over $N$, we get that the joint density of $(X,Y)$ is $f_{X,Y}$. Now, for each $n \in \mathbb{N}$, the main result of \cite{Pillai2016} will yield that if $(X,Y)$ has joint density $g^{(n)}_{X,Y}(x,y)$, then $Z \sim \C$. Averaging over $N$ yields the claim. In \cite{kelker1970}, the author notes that certain families of spherically symmetric distributions can be expressed as \eqref{eqn:gausmix}. The above argument will then yield that $Z \sim \C$.
 Theorem \ref{thm:main} generalizes this observation further in two ways. First, Theorem \ref{thm:main} shows that if the joint density of $(X,Y)$ is proportional to product of spherically symmetric densities, then $Z \sim \C$. Second, 
it shows how to incorporate antisymmetry.
 
 \end{remark}
\begin{example} \label{eqn:cop}
Let $\Sigma$ be a $2m \times 2m$ positive definite matrix and $h(x) = e^{-{1 \over 2} x}$. 
Set
\be
f_{X,Y}(x,y) \propto h(X^\top \Sigma^{-1} X + Y^\top \Sigma^{-1} Y) =
\exp \big \{ -{ 1 \over 2} (X^\top \Sigma^{-1} X + Y^\top \Sigma^{-1} Y) \big \}
\ee
so that $(X,Y)$ are jointly Gaussian but independent. Theorem \ref{thm:main} yields that $Z  = \sum_{j=1}^m w_j {X_j \over Y_j} \sim \C$. This result was of course conjectured in \cite{DrtonXiao16} and proved in \cite{Pillai2016}. Interestingly, to our knowledge, this is the only example that satisfies the hypothesis of Theorem \ref{thm:main} such that $(X,Y)$ are independent. A natural generalization of this density that satisfies the hypothesis of Theorem \ref{thm:main} is
\be \label{eqn:natgen}
f_{X,Y}(x,y) \propto
(X^\top A X + Y^\top A Y)^{2q}\exp \big \{ -{ 1 \over 2} (X^\top \Sigma^{-1} X + Y^\top \Sigma^{-1} Y) \big \}
\ee
where $A$ is an arbitrary $m \times m$ symmetric matrix and $q \in \mathbb{N}$. See equation \eqref{eqn:copula} for an example of a density of the form \eqref{eqn:natgen}.
\end{example}
\begin{example}\label{eg:Bmat}
Take $m=2$ and set 
\be
f_{X,Y}(x,y) \propto  (x_1y_2 - x_2y_1)^2 \exp\big\{-{1\over 2}(x_1^2  + x_2^2 + y_1^2 + y_2^2)\big\}.
\ee
The joint density $f_{X,Y}$ does satisfy the hypothesis of Theorem \ref{thm:main},
and thus $Z = \sum_{j=1}^2 w_j {X_j \over Y_j} \sim \C$. 
\end{example}
\begin{example}\label{eg:Bmat2}
Take $m=2$. Consider a positive definite matrix $F$ of the form
\be
F = \left( \begin{array}{cccc}
a & c &0&d \\
c& b &-d&0 \\
0&-d&a&c\\
d&0&c&b \end{array} \right)
\ee
where $a,b,c,d \in \mathbb{R}$. If $\min(a,b) > |c| + |d|$, then $F$ will be diagonally dominant and thus positive definite.
Let $(X,Y)$ have a joint Gaussian distribution with precision matrix $F$, \textit{i.e.,}
\be  \label{eqn:densityGaussB}
f_{X,Y}(x,y) \propto \exp\{-{1 \over 2} (x^\top y^\top) F (x, y)\}.
\ee
The density $f_{X,Y}$ above satisfies the hypothesis of Theorem \ref{thm:main} and thus  $Z = \sum_{j=1}^2 w_j {X_j \over Y_j} \sim \C$. It is well known that zeroes in the precision matrix indicate conditional independence. Thus if $d\neq 0$ in the joint density $f_{X,Y}$ in \eqref{eqn:densityGaussB}, then $X_1$ is conditionally independent of $Y_1$ given $(X_2,Y_2)$. Similarly, $X_2$ is conditionally independent of $Y_2$ given $(X_1, Y_1)$.
\end{example}
Thus Example \ref{eg:Bmat2} shows that even in the Gaussian case, independence of $(X,Y)$ is not needed; certain conditional independence relations might suffice. 
\begin{example} \label{eg:Bmat3}
Taking $c=0$ and $d \neq 0$ in the matrix $F$ in Example
\ref{eg:Bmat2} reveals the following surprise. 
Pick $\rho \in (-1,1)$ and set
\be
\Sigma_\rho = \left( \begin{array}{cc}
1 & \rho  \\
\rho & 1 \end{array} \right), \quad \Sigma_{-\rho} = \left( \begin{array}{cc}
1 & -\rho  \\
-\rho & 1 \end{array} \right).
\ee
Let 
$(X_1,Y_2) \sim N(0, \Sigma_\rho)$ and $(X_2,Y_1) \sim N(0,\Sigma_{-\rho})$.
Let $(X_1,Y_2)$ be independent of $(X_2, Y_1)$. Thus they have the joint density
 \be 
f_{X,Y}(x,y) \propto
 \exp \Big \{-{1 \over 2(1-\rho^2)} ((x_1^2 + y_2^2) - 2\rho x_1y_2 ) \Big \} \exp \Big \{-{1 \over 2} ((x_2^2 + y_1^2)+ 2\rho x_2y_1 ) \Big\}.
\ee
This corresponds to the joint density $f_{X,Y}$ in \eqref{eqn:densityGaussB} in Example
\eqref{eg:Bmat} with values $c=0$, $d = {-\rho/(1- \rho^2)}$ and $a = b = {1/(1-\rho^2)}$ for the entries of matrix $F$. Thus, it follows that the result conjectured in \cite{DrtonXiao16} also holds with the pairs $(X_1, Y_2)$ and $(X_2,Y_1)$ with $\mathrm{cov}(X_1,Y_2) = -\mathrm{cov}(X_2,Y_1)$! Generalization of this example to $m > 2$ will be of interest.
\end{example}
\begin{remark} \label{rem:symm}
Examples \ref{eg:Bmat}--\ref{eg:Bmat3} show that rotational invariance is not the key to full generality. Theorem \ref{thm:main} can be generalized further by only requiring that the joint density of $(X,Y)$ in polar coordinates depends on $\Theta$ only via $2\pi$-periodic functions of $(\theta_j - \theta_k)$ for $1\leq j \neq k \leq m$. 
The author refrained from doing so, to keep the exposition simple. The author does not know if this formulation might fully characterize the family of joint distributions for $(X,Y)$ so as to have $Z = \sum_j w_j {X_j \over Y_j} \sim \C$. 
\end{remark}
 \section{Copulas of Cauchy Distributions} 
 Theorem \ref{thm:main} also yields that the marginal distributions of the ratios $Z_j = {X_j \over Y_j} \sim \C$. This gives a natural way of constructing  copulas of Cauchy distributions. In this section we work out the simplest case for $m=2$. Our calculations yield a novel and interesting family of bivariate copulas with Cauchy marginals.
Let 
\be
\Sigma =  \left( \begin{array}{cc}
\rho & 0  \\
0 & \rho  \end{array} \right), \quad \rho \in (-1,1).
\ee
Let $X,Y \sim \mathrm{N}(0,\Sigma)$ be independent. Let $w_1, w_2 \geq 0$ with $w_1 +w_2 = 1$. Let
\be
Z =  w_1 Z_1+ w_2 Z_2
\ee
where $Z_1 =  {X_1 \over Y_1}, \, Z_2 = {X_2 \over Y_2}.$
Theorem \ref{thm:main} implies that $Z \sim \C$.
 Let $f^\rho_{Z_1,Z_2}(z_1,z_2)$ denote the joint distribution of $(Z_1,Z_2)$.  
\begin{lemma} \label{lem:frhoC}
The joint density $f^\rho_{Z_1,Z_2}$ is an infinite mixture of bivariate copulas of Cauchy densities:
\be [eqn:fz1z2jointexp]
f^\rho_{Z_1,Z_2}(z_1,z_2) 
&=
(1-\rho^2)\sum_{n=0}^\infty \rho^{2n} \,f^{(n)}_{Z_1,Z_2}(z_1,z_2) \\
  f^{(n)}_{Z_1,Z_2}(z_1,z_2)(z_1,z_2) &= {2^{2n} \over {2n \choose n}}{1\over \pi^2} {(1+z_1z_2)^{2n} \over (1+z_1^2)^{n+1} (1+z_2^2)^{n+1} }.
\ee
Moreover, for every $n \in \mathbb{N}$, if $(C_1,C_2) \sim f^{(n)}_{Z_1,Z_2}$, then $C_1,C_2 \sim \C$ and $w_1 C_1 + w_2 C_2 \sim \C$.
\end{lemma}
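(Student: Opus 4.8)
The plan is to prove the two halves of the lemma separately. First I would establish the series representation \eqref{eqn:fz1z2jointexp} by a direct Gaussian computation, and then deduce the marginal and convex-combination properties of each component $f^{(n)}_{Z_1,Z_2}$ by matching powers of $\rho$, \emph{bootstrapping} off the fact that the corresponding statements for $f^\rho_{Z_1,Z_2}$ already hold for the whole family $\rho\in(-1,1)$ via Theorem \ref{thm:main}.

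For the series representation I would write the joint Gaussian density of $(X,Y)$ and change variables from $(X_1,X_2)$ to $(Z_1,Z_2)$ through $X_j = z_j Y_j$ at fixed $Y$, whose Jacobian is $|y_1 y_2|$, giving
\be
f^\rho_{Z_1,Z_2}(z_1,z_2) = \int_{\R^2} g(z_1 y_1,\, z_2 y_2)\, g(y_1,y_2)\, |y_1 y_2|\, dy_1\, dy_2,
\ee
where $g$ is the centred bivariate normal density of correlation $\rho$. Inserting $\Sigma^{-1}$ collapses the exponent into a diagonal part $-\tfrac{1}{2(1-\rho^2)}\big[(1+z_1^2)y_1^2+(1+z_2^2)y_2^2\big]$ plus a single cross term $\tfrac{\rho(1+z_1z_2)}{1-\rho^2}\,y_1 y_2$. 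The key step is to expand $\exp\{(\,\cdot\,)\,y_1 y_2\}$ as a power series: the term of order $k$ carries a factor $(y_1 y_2)^k|y_1 y_2|$, which is odd in each variable separately exactly when $k$ is odd, so all odd terms integrate to zero and only $k=2n$ survives. Each surviving term factorizes into one-dimensional absolute moments $\int_\R |y|^{2n+1}e^{-ay^2}\,dy = n!/a^{n+1}$, and collecting the powers of $(1-\rho^2)$, the powers of two, and the factorial ratio $(n!)^2/(2n)! = 1/\binom{2n}{n}$ reproduces \eqref{eqn:fz1z2jointexp} term by term. The term-by-term integration is justified by Tonelli, since after expanding every summand is nonnegative.

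For the ``Moreover'' statement I would avoid computing any integral of $f^{(n)}$ directly and instead match powers of $\rho$. First note that the present model, $X,Y\sim\mathrm{N}(0,\Sigma)$ independent, satisfies the hypothesis of Theorem \ref{thm:main}, its joint density being of the required form with the single block-diagonal matrix $F=\mathrm{diag}(\Sigma^{-1},\Sigma^{-1})$, i.e.\ \eqref{eqn:Fmat} with $A=\Sigma^{-1}$ and $B=0$. Hence, for \emph{every} $\rho\in(-1,1)$, the characteristic functions of $Z_1$, of $Z_2$, and of $w_1 Z_1 + w_2 Z_2$ under $f^\rho_{Z_1,Z_2}$ all equal $e^{-|t|}$. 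Expressing any of these through the mixture \eqref{eqn:fz1z2jointexp} yields an identity $(1-\rho^2)\sum_{n\ge 0}\rho^{2n}\varphi_n(t) = e^{-|t|}$, where $\varphi_n$ is the same characteristic function computed under $f^{(n)}_{Z_1,Z_2}$. Since the right side is independent of $\rho$ while the left is, after the telescoping $\varphi_0 + \sum_{n\ge 1}\rho^{2n}(\varphi_n-\varphi_{n-1})$, a power series in $\rho$ convergent on $(-1,1)$, uniqueness of coefficients forces $\varphi_0 = e^{-|t|}$ and $\varphi_n = \varphi_{n-1}$ for all $n\ge 1$, hence $\varphi_n\equiv e^{-|t|}$ for every $n$. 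Applying this to the three characteristic functions gives $C_1,C_2\sim\C$ and $w_1 C_1 + w_2 C_2 \sim \C$; nonnegativity of $f^{(n)}$ is immediate from its closed form, and that it integrates to one falls out of its marginal being a genuine Cauchy density.

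The step I expect to be the crux is the legitimacy of the coefficient matching: I must verify that the bounded characteristic functions $\varphi_n$ (each $|\varphi_n|\le 1$) produce a power series in $\rho$ with radius of convergence at least one, so its coefficients are uniquely determined and a constant function forces all nonconstant coefficients to vanish. This is precisely where the availability of Theorem \ref{thm:main} for the \emph{entire} one-parameter family, rather than a single $\rho$, is indispensable. Everything else is routine: the Gaussian moment integrals are standard and the parity cancellation of odd terms is elementary.
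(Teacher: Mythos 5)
Your derivation of the series expansion \eqref{eqn:fz1z2jointexp} is essentially the paper's own: the change of variables with Jacobian $|y_1y_2|$, the power-series expansion of the cross term, the parity cancellation of odd orders, and the Gaussian absolute-moment evaluation all coincide with equations \eqref{eqn:jointdis}--\eqref{eqn:wmarg}. Where you genuinely diverge is the ``Moreover'' part. The paper identifies $f^{(n)}_{Z_1,Z_2}$ structurally: it reads off from \eqref{eqn:fz1z2exp} that $(C_1,C_2)$ arises as the pair of ratios $E_1/F_1,\,E_2/F_2$ where $(E,F)$ has density proportional to $(e_1e_2+f_1f_2)^{2n}\exp\{-\tfrac12(e_1^2+e_2^2+f_1^2+f_2^2)\}$, which satisfies the hypothesis of Theorem \ref{thm:main} (this is exactly the density \eqref{eqn:copula} anticipated in Example \ref{eqn:cop}), so the Cauchy conclusions follow directly. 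You instead exploit the whole one-parameter family: since Theorem \ref{thm:main} makes $Z_1$, $Z_2$ and $w_1Z_1+w_2Z_2$ Cauchy for \emph{every} $\rho\in(-1,1)$, matching coefficients of the power series in $\rho^2$ forces each $\varphi_n$ to equal $e^{-|t|}$. This is a valid and rather elegant alternative; the uniqueness-of-coefficients step is legitimate because $|\varphi_n|\le 1$ gives radius of convergence at least one. The one point to tighten is a mild circularity in your justification that each $\varphi_n$ is a bona fide characteristic function: you should first apply Tonelli to the nonnegative identity $(1-\rho^2)\sum_n\rho^{2n}\int f^{(n)}=\int f^\rho=1$ (equivalently, run your coefficient-matching at $t=0$ first) to conclude $\int f^{(n)}=1$, and only then invoke Fubini for complex $e^{itz}$ integrands. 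The trade-off between the two routes: the paper's argument yields an explicit generative representation of each component copula (useful for simulation and for the theme of the paper, since it exhibits \eqref{eqn:copula} as a new instance of Theorem \ref{thm:main}), whereas yours is more economical, requiring no identification of the underlying $(E,F)$ density, but gives less structural information about $f^{(n)}$.
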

\begin{proof} We make the following transformation:
\be
 V_1 = Y_1, \, V_2 = Y_2.
\ee
The Jacobian of the transformation $(X_1,X_2, Y_1,Y_2) \mapsto (Z_1,Z_2,V_1,V_2)$
is $|V_1V_2|$. The joint density of $(Z_1, Z_2, V_1, V_2)$ is 
\be 
f_{Z_1,Z_2, V_1,V_2}&(z_1,z_2,v_1,v_2)  \label{eqn:jointdis} \\
&\hspace{-2cm} ={1 \over (2\pi)^2 (1-\rho^2)}|v_1v_2|\exp\Big\{ -{1 \over 2(1-\rho^2)}\Big(v_1^2(1+ z_1^2) + v_2^2(1+z^2_2) - 2\rho v_1v_2 (1+z_1 z_2)\Big)\Big\}. 
\ee
Taylor expansion of \eqref{eqn:jointdis} yields 
\be \label{eqn:fz1z2exp}
f_{Z_1,Z_2, V_1,V_2}(z_1,z_2,v_1,v_2) &  \\
&\hspace{-3cm} ={1 \over (2\pi)^2 (1-\rho^2)}|v_1v_2|\exp\Big\{ -{1 \over 2(1-\rho^2)}\Big(v_1^2(1+ z_1^2) + v_2^2(1+z^2_2)\Big)\Big\} \\
&\hspace{3cm}\Big[\sum_{n=0}^\infty {1 \over n!} { \rho^n \over (1-\rho^2)^n} (v_1v_2)^n (1+z_1z_2)^n \Big].
\ee
Now using the fact for any $c>0$,
\be \label{eqn:wmarg}
\int_{\mathbb{R}} |v| v^n e^{-{1 \over 2} {v^2 \over c}(1+z^2)} dv &=
2^{n/2 +1} c^{n/2 + 1} {1 \over (1+ z^2)^{n/2 +1}} \Gamma({n \over 2} +1) \,1_{n \in 2\mathbb{Z}},
\ee
we can integrate over $v_1, v_2$ in equation \eqref{eqn:fz1z2exp} to get
\be
f^\rho_{Z_1,Z_2}(z_1,z_2) 
&=
(1-\rho^2)\sum_{n=0}^\infty \rho^{2n} {2^{2n} \over {2n \choose n}}{1\over \pi^2} {(1+z_1z_2)^{2n} \over (1+z_1^2)^{n+1} (1+z_2^2)^{n+1} }\\
&=(1-\rho^2)\sum_{n=0}^\infty \rho^{2n} \,f^{(n)}_{Z_1,Z_2}(z_1,z_2)
\ee
proving the first claim.\par
From equations \eqref{eqn:fz1z2exp} and \eqref{eqn:wmarg}, it can be seen that the random variables $C_1,C_2 \sim f^{(n)}_{Z_1,Z_2}$ can be generated by via the ratios $C_1 = {E_1 \over F_1}$, $C_2 = {E_2 \over F_2}$, where $(E,F)$ have the joint density,
\be \label{eqn:copula}
f_{E,F}(e,f) \propto (e_1e_2 + f_1f_2)^{2n} \exp
\Big \{{-1\over 2}(e_1^2 + e_2^2 + f_1^2 + f_2^2)\Big \}.
\ee 
The density $f_{E,F}$ satisfies the hypothesis of Theorem \ref{thm:main}; see Example \ref{eqn:cop}. Thus we have $C_1, C_2 \sim \C$ and $w_1 C_1 + w_2 C_2 \sim \C$,
and the proof is finished.
\end{proof}
The first term $f^{(0)}_{Z_1,Z_2}$ in the expansion \eqref{eqn:fz1z2jointexp}
is just the product of independent Cauchy densities:
\be
f^{(0)}_{Z_1,Z_2}(z_1,z_2) = {1 \over \pi^2} {1 \over (1+z_1^2)(1+z_2^2)}.
\ee
The role of the parameter $\rho$ in \eqref{eqn:fz1z2jointexp} is also interesting. It appears only as a weight in the mixture and neatly decouples from the probability densities $f^{(n)}$. It will be of interest to know if this phenomenon persists in higher dimensions $(m>2)$ as well. Finally, using the calculations in this section, it can be verified that $f^\rho_{Z_1,Z_2}$ has the closed form expression
\be
f^\rho_{Z_1,Z_2}(z_1,z_2)&=
{1-\rho^2 \over \pi^2} {1 \over (1 + z_1^2)(1+ z_2^2) - \rho^2 (1+z_1z_2)^2} \\ 
&\hspace{0.4cm}+{1-\rho^2 \over \pi^2}  {\rho(1+z_1z_2) \over {((1 + z_1^2)(1+ z_2^2) - \rho^2 (1+z_1z_2)^2)^{3/2}}} \sin^{-1} {\rho(1+z_1 z_2) \over \sqrt{1+z_1^2}\sqrt{1+z_2^2}}.
\ee
\section*{Acknowledgement}
The author is partially supported by an ONR grant. He wishes to thank various colleagues for their interest in this work.
Special thanks are due to Mathias Drton for introducing the author to this problem, Gerard Letac for corrections, Christian Robert for comments that led to Remark \ref{rem:xian} and Xiao-Li Meng for inspiration and constant encouragement.
\bibliographystyle{alpha}
\bibliography{chisq}

\end{document}